\documentclass[11pt,reqno]{amsart}
\usepackage{amsfonts}
\usepackage{amsmath}
\usepackage{graphicx}
\usepackage{epsf}
\usepackage{amsthm}
\usepackage{amsfonts}
\usepackage{amssymb}
\usepackage{epsfig}
\usepackage{graphicx}
\usepackage{setspace}
\usepackage{color}
\usepackage{texdraw}
\usepackage{float}
\setlength{\marginparwidth}{0in}
\setlength{\marginparsep}{0in}
\setlength{\oddsidemargin}{-0.3in}
\setlength{\evensidemargin}{-0.3in}
\setlength{\textwidth}{7.0in}
\setlength{\topmargin}{-0.5in}
\setlength{\topmargin}{0in}
\setlength{\textheight}{9.0in} \setlength{\unitlength}{1 cm}
\addtolength{\parskip}{1mm}
\onehalfspacing
\newcommand{\ith}[2]{{#1}_{\:\!{\mbox{\footnotesize{#2}}}}}
\newcommand{\itoj}[2]{\ovl{#1,\!#2}}
\newcommand{\vij}[3]{{#1}_{\:\!{#2}}\mbox{\footnotesize{[$#3$]}}}
\newcommand{\vj}[2]{{#1}\mbox{\footnotesize{[$#2$]}}}
\newcommand{\vi}[2]{{#1}_{\:\!#2}}
\def\dspl{\displaystyle}
\def\ovl{\overline}

\theoremstyle{definition}
\newtheorem{theorem}{Theorem}
\newtheorem{corollary}{Corollary}
\newtheorem{remark}{Remark}
\newtheorem{example}{Example}
\newtheorem{lemma}{Lemma}
\newtheorem{definition}{Definition}

\newcommand{\R}{\mathbb{R}}

\newcommand{\x}{\mathbf{x}}
\newcommand{\X}{\mathbf{X}}
\newcommand{\y}{\mathbf{y}}
\newcommand{\0}{\mathbf{0}}

\newcommand{\Bc}{\mathbf{c}}

\begin{document}

\author{Tsvetan Asamov}
\address[Tsvetan Asamov]{Department of Operations Research and Financial Engineering, Princeton University, 98 Charlton Street, Princeton, NJ 08540, USA}
\email{tasamov@princeton.edu}
\author{Adi Ben--Israel}
\address[Adi Ben--Israel]{Rutgers Business School,  Rutgers University, 100 Rockafeller Road, Piscataway, NJ 08854, USA}
\email{adi.benisrael@gmail.com}

\title[High Dimensional Clustering]
{A Probabilistic $\ell_1$ Method for Clustering High Dimensional Data}
\date{April 7, 2016}
\keywords{Clustering, $\ell_1$--norm, high--dimensional data, continuous location}
\subjclass[2010]{Primary 62H30, 90B85; Secondary 90C59}

\begin{abstract}
In general, the clustering problem is NP--hard, and global optimality cannot be established for non--trivial instances. For high--dimensional data, distance--based methods for clustering or classification face an additional difficulty, the unreliability of distances in very high--dimensional spaces.
We propose a probabilistic, distance--based, iterative method for clustering data in very high--dimensional space, using the $\ell_1$--metric that is less sensitive to high dimensionality than the Euclidean distance. For $K$ clusters in $\R^n$, the problem decomposes to $K$ problems coupled by probabilities, and an iteration reduces to finding $Kn$ weighted medians of points on a line. The complexity of the algorithm is linear in the dimension of the data space, and its performance was observed to improve significantly as the dimension increases.

\end{abstract}
\maketitle
\begin{spacing}{1.0}
\section{Introduction}
\label{sec:Introduction}
The emergence and growing applications of big data have underscored the need for efficient algorithms based on optimality principles, and scalable methods that can provide valuable insights at a reasonable computational cost.

In particular, problems with high--dimensional data have arisen in several scientific and technical areas (such as genetics \cite{kailing2004density}, medical imaging \cite{ye2010groupwise} and spatial databases \cite{kolatch2001clustering}, etc.)
These problems pose a special challenge because of the unreliability of distances in very high dimensions. In such problems it is often advantageous to use the $\ell_1$--metric which is less sensitive to the ``curse of dimensionality'' than the Euclidean distance.

We propose a new probabilistic distance--based method for clustering data in very high--dimensional spaces.
The method uses the $\ell_1$--distance, and computes the cluster centers using weighted medians of the given data points.
Our algorithm resembles well--known techniques such as fuzzy clustering \cite{BEZDEK-81} and $K$--means, and inverse distance interpolation \cite{SHEPARD-68}.

The cluster membership probabilities are derived from necessary optimality conditions for an approximate problem, and decompose a clustering problem with $K$ clusters in $\R^n$ into $Kn$ one--dimensional problems, which can be solved separately.
The algorithm features a straightforward implementation and a polynomial running time, in particular, its complexity is linear in the dimension $n$. In numerical experiments it outperformed several commonly used methods, with better results for higher dimensions.

While the cluster membership probabilities simplify our notation, and link our results to the theory of subjective probability, these probabilities are not needed by themselves, since they are given in terms of distances, that have to be computed at each iteration.

\subsection{Notation}
\label{subsec:Notation}
We use the abbreviation
$
\itoj{1}{K}:=\{1,2,\ldots,K\} 
$
for the indicated index set.
The $\ith{j}{th}$ component of a vector $\vi{\x}{i}\in\R^n$ is denoted $\vij{\x}{i}{j}$. The $\ell_p$--\textbf{norm} of a vector $\x=(\vj{\x}{j})\in\R^n$ is
\[
\vi{\|\x\|}{p}:=(\sum_{j=1}^n\,\big| \vj{\x}{j} \big|^p)^{1/p}
\]
and the associated $\ell_p$--\textbf{distance} between two vectors $\x$ and $\y$ is $d_p(\x,\y):=\|\x-\y\|_p$, in particular, the Euclidean distance with $p=2$, and the $\ell_1$--distance,
\begin{equation}
d_1(\x,\y)=\|\x-\y\|_1 = \sum_{j=1}^n\,\big|\vj{\x}{j}-\vj{\y}{j}\big|.
\label{eq:L1-distance}
\end{equation}
\subsection{The clustering problem}
\label{subsec:Clustering problem}
Given
 \begin{itemize}
 \item a set $\mathbf{X}=\{\vi{\x}{i}:\,i\in\itoj{1}{N}\}\subset\R^n$ of $N$ points $\vi{\x}{i}$ in $\R^n$,
 \item their \textbf{weights} $W=\{\vi{w}{i}>0:i\in\itoj{1}{N}\}$, and
 \item an integer $1\leq K \leq  N$,
 \end{itemize}
 partition $\mathbf{X}$ into $K$ \textbf{clusters} $\{\vi{\mathbf{X}}{k}:\,k\in\itoj{1}{K}\}$, defined as disjoint sets
where the points in each cluster are \textbf{similar} (in some sense), and points in different clusters are dissimilar. If by \textbf{similar} is meant \textbf{close} in some metric $d(\x,\y)$, we have a \textbf{metric} (or \textbf{distance based}) \textbf{clustering problem}, in particular $\ell_1$--\textbf{clustering} if the $\ell_1$--distance is used, \textbf{Euclidean clustering} for the $\ell_2$--distance, etc.
\subsection{Centers} In metric clustering each cluster has a representative point, or \textbf{center}, and distances to clusters are defined as the distances to their centers. The center $\vi{\Bc}{k}$ of cluster $\vi{\mathbf{X}}{k}$ is a point $\Bc$ that minimizes the sum of weighted distances to all points of the cluster,
\begin{equation}
\vi{\Bc}{k}:=\arg\min \,\big\{\sum_{\vi{\x}{i}\in\mathbf{X}_k}\,\vi{w}{i}\,d(\vi{\x}{i},\Bc)\big\}.
\label{eq:c^k}
\end{equation}
Thus, the metric clustering problem can be formulated as follows: 
Given $\mathbf{X}, W$ and $K$ as above, find centers $\{\vi{\Bc}{k}:\,k\in\itoj{1}{K}\}\subset\R^n$ so as to minimize
\begin{equation}
\min_{\Bc_1,\cdots,\Bc_K}\,\sum_{k=1}^K\,\,\sum_{\vi{\x}{i}\in \mathbf{X}_k}\,\vi{w}{i}\,d(\vi{\x}{i},\vi{\Bc}{k}),
\tag{\textbf{L}.$K$}
\end{equation}
where $\vi{\mathbf{X}}{k}$ is the cluster of points in $\mathbf{X}$ assigned to the center $\vi{\Bc}{k}$.
\subsection{Location problems}
\label{subsec:Location}
Metric clustering problems often arise in location analysis, where $\mathbf{X}$ is the set of the locations of customers, $W$ is the set of their weights (or demands), and it is required to locate $K$ facilities $\{\vi{\Bc}{k}\}$ to serve the customers optimally in the sense of total weighted-distances traveled. The
problem (\textbf{L}.$K$) is then called a \textbf{multi--facility location problem}, or a \textbf{location--allocation problem} because it is required to locate the centers, and to assign or allocate the points to them.

Problem (\textbf{L}.$K$) is trivial for $K=N$ (every point is its own center) and reduces for $K=1$ to the \textbf{single facility location problem}: find the location of a \textbf{center} $\Bc\in\R^n$ so as to minimize the sum of weighted distances,
\begin{equation}
\min_{\dspl{\Bc\in\R^n}}\,\sum_{i=1}^N\,\vi{w}{i}\,d(\vi{\x}{i},\Bc).
\tag{\textbf{L}.$1$}
\end{equation}

For $1<K<N$, the problem (\textbf{L}.$K$) is NP-hard in general \cite{MEGIDDO-SUPOWIT}, while the planar case can be solved  polynomially in $N$, \cite{DREZNER-84}.
\subsection{Probabilistic approximation}
\label{subsec:Approximation}
(\textbf{L}.$K$) can be approximated by a continuous problem
\begin{equation}
\min_{\Bc_1,\cdots,\Bc_K}\,\sum_{k=1}^K\,\,\sum_{\vi{\x}{i}\in \mathbf{X}}\,\vi{w}{i}\,p_k(\vi{\x}{i})d(\vi{\x}{i},\vi{\Bc}{k}),
\tag{\textbf{P}.$K$}
\end{equation}
where rigid assignments $\vi{\x}{i}\in \vi{\X}{k}$ are replaced by probabilistic (soft) assignments, expressed by probabilities $p_k(\vi{\x}{i})$ that a point $\vi{\x}{i}$ belongs to the cluster $\vi{\X}{k}$.

For each point $\vi{\x}{i}$ the \textbf{cluster membership probabilities} $p_k(\vi{\x}{i})$ sum to 1, and are assumed to depend on the distances $d(\vi{\x}{i},\vi{\Bc}{k})$ as follows
\begin{equation*}
\fbox{membership in a cluster is more likely the closer is its center}
 \tag{\textbf{A}}
 \end{equation*}

Given these probabilities, the problem (P.$K$) can be decomposed into $K$ single facility location problems,
\begin{equation}
\min_{\Bc_k}\,\sum_{\vi{\x}{i}\in \mathbf{X}}\,p_k(\vi{\x}{i})\vi{w}{i}\,d(\vi{\x}{i},\vi{\Bc}{k}),
\tag{P.$k$}
\end{equation}
for $k\in\itoj{1}{K}$. The solutions $\vi{\Bc}{k}$ of the $K$ problems (P.$k$), are then used to calculate the new distances $d(\vi{\x}{i},\vi{\Bc}{k})$ for all $i\in\itoj{1}{N},\, k\in\itoj{1}{K}$, and from them, new probabilities $\{p_k(\vi{\x}{i})\}$, etc.
\subsection{The case for the $\ell_1$ norm}
\label{subsec:Dimension}
In high dimensions, distances between points become unreliable \cite{BEYER-ET-AL}, and this  in particular ``makes a proximity query meaningless and unstable because there is poor discrimination between the nearest and furthest neighbor" \cite{AGGARWAL-ET-AL-2000}. For the Euclidean distance
\begin{equation}
d_2(\x,\y)=
(\sum_{j=1}^n\,\big| \vj{\x}{j}-\vj{\y}{j} \big|^2)^{1/2}=
(\|\x\|_2^2-2\,\sum_{j=1}^n\,\vj{\x}{j}\,\vj{\y}{j}+\|\y\|_2^2)^{1/2}
\label{eq:L2-distance}
\end{equation}
between random points $\x,\y\in\R^n$, the cross products $\vj{\x}{j}\,\vj{\y}{j}$ in (\ref{eq:L2-distance}) tend to cancel for very large $n$, and consequently,
\[
d_2(\x,\y)\approx (\|\x\|_2^2+\|\y\|_2^2)^{1/2}.
\]
In particular, if $\x,\y$ are random points on the unit sphere in $\R^n$ then $d_2(\x,\y)\approx \sqrt{2}$ for very large $n$. This ``curse of high dimensionality'' limits the applicability of distance based methods in high dimension.

The $\ell_1$--distance is less sensitive to high dimensionality, and has been shown to ``provide the best discrimination in high--dimensional data spaces'',  \cite{AGGARWAL-ET-AL-2000}. We use it throughout this paper.
\subsection*{The plan of the paper}
The $\ell_1$--metric clustering problem is solved in \S~\ref{sec:L.1} for one center. A probabilistic approximation of (\textbf{L}.$K$) is discussed in \S~\ref{sec:Approximation}, the probabilities studied in \S\S~\ref{sec:Axioms}--\ref{sec:Probabilistic memberships}. The centers of the approximate problem are computed in \S~\ref{sec:Centers}.
Our main result, Algorithm PCM($\ell_1$) of \S~\ref{sec:Algorithm}, uses the power probabilities of \S~\ref{sec:Power probabilities}, and has running time that is linear in the dimension of the space, see Corollary~\ref{cor:Time}. Theorem 1, a monotonicity property of Algorithm~PCM($\ell_1$), is proved in \S~\ref{sec:Monotone}. Section \ref{sec:Conclusion} lists conclusions. Appendix A shows relations to previous work, and Appendix B reports some numerical results.
\section{The single facility location problem with the $\ell_1$--norm}
\label{sec:L.1}

For the $\ell_1$--distance (\ref{eq:L1-distance}) the problem (\textbf{L}.1) becomes
\begin{equation}
\min_{\Bc\in\R^n}\,\sum_{i=1}^N\,\vi{w}{i}\,d_1(\x_i,\Bc),\quad \text{or}\quad
\min_{\Bc\in\R^n}\,\sum_{i=1}^N\,\vi{w}{i}\,\sum_{j=1}^n\,
\big|\vij{\x}{i}{j}-\vj{\Bc}{j}\big|,
\label{eq:L1-ell1}
\end{equation}
in the variable $\Bc\in\R^n$, which can be solved separately for each component $\vj{\Bc}{j}$, giving the $n$ problems
\begin{equation}
\min_{\Bc[j]\in\R}\,\sum_{i=1}^N\,\vi{w}{i}\,
\big|\vij{\x}{i}{j}-\vj{\Bc}{j}\big|,\ j\in\itoj{1}{n}.
\label{eq:L1-j}
\end{equation}

\begin{definition}
\label{def:w-median}
Let $\mathbf{X}=\{\vi{x}{1},\cdots,\vi{x}{N}\}\in\R$ be an ordered set of points
\[
\vi{x}{1} \leq \vi{x}{2} \leq \cdots \leq \vi{x}{N}
 \]
and let $\mathbf{W}=\{\vi{w}{1},\cdots,\vi{w}{N}\}$ be a corresponding set of positive weights. A point $x$  is a \textbf{weighted median} (or $\mathbf{W}$--\textbf{median}) of $\mathbf{X}$ if there exist $\alpha,\beta\geq 0$ such that
\begin{equation}
\sum\,\{\vi{w}{i}:\,\vi{x}{i} < x\} + \alpha = \sum\,\{\vi{w}{i}:\,\vi{x}{i} > x\} + \beta
\label{eq:w-median}
\end{equation}
where $\alpha+\beta$ is the weight of $x$ if $x\in \mathbf{X}$, and $\alpha=\beta=0$ if $x\not\in \mathbf{X}$.
\end{definition}
The weighted median always exists, but is not necessarily unique.
\begin{lemma}
\label{lem:W-median}
For $\mathbf{X},\,\mathbf{W}$ as above,
define
\begin{equation}
\vi{\theta}{k}:=\frac{\sum\limits_{i=1}^k\,\vi{w}{i}}{\sum\limits_{i=1}^N\,\vi{w}{i}},\quad k\in\itoj{1}{N},
\label{eq:theta-k}
\end{equation}
and let $k^*$ be the smallest $k$ with $\vi{\theta}{k} \geq \tfrac{1}{2}$. If
\begin{equation}
\vi{\theta}{k^*}>\tfrac{1}{2}
\label{eq:theta>}
\end{equation}
then $\vi{x}{k^*}$ is the unique weighted median, with
\begin{equation}
\alpha = \tfrac{1}{2}\,\left(\vi{w}{k^*}+\sum_{k>k^*}\,\vi{w}{k}-\sum_{k<k^*}\,\vi{w}{k}\right),\ \beta = \vi{w}{k^*}-\alpha.
\label{eq:alpha,beta}
\end{equation}
Otherwise, if
\begin{equation}
\vi{\theta}{k^*}=\tfrac{1}{2},
\label{eq:theta<}
\end{equation}
then any point in the open interval $(\vi{x}{k^*},\vi{x}{k^*\!\!+\!\!1})$ is a weighted median with $\alpha=\beta=0$.
\end{lemma}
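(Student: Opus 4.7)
The strategy is to translate Definition~\ref{def:w-median} into a single inequality involving the cumulative fractions $\theta_k$, and then to read both cases off directly. Let $W := \sum_{i=1}^{N} w_i$ and, for a real number $x$, set $L(x) := \sum\{w_i : x_i < x\}$, $R(x) := \sum\{w_i : x_i > x\}$ and $w(x) := \sum\{w_i : x_i = x\}$; we may assume the $x_i$ are distinct, since tied points can be merged with summed weights without changing the median. Requiring $\alpha,\beta \ge 0$, $\alpha+\beta = w(x)$ and $L(x)+\alpha = R(x)+\beta$ determines
\[
\alpha = \tfrac{1}{2}\bigl(R(x) - L(x) + w(x)\bigr), \qquad \beta = \tfrac{1}{2}\bigl(L(x) - R(x) + w(x)\bigr),
\]
and a valid splitting exists iff $|R(x)-L(x)| \le w(x)$. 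At $x = x_j$ we have $L = W\theta_{j-1}$, $R = W(1-\theta_j)$ and $w(x_j) = W(\theta_j - \theta_{j-1})$, and this inequality collapses to the clean condition $\theta_{j-1} \le \tfrac{1}{2} \le \theta_j$. For $x \notin \mathbf{X}$ lying in an interval $(x_j, x_{j+1})$, one has $w(x)=0$, so the condition reduces to $L(x) = R(x)$, i.e. $\theta_j = \tfrac{1}{2}$.

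Under (\ref{eq:theta>}), the minimality of $k^*$ gives $\theta_{k^*-1} < \tfrac{1}{2}$ (with the convention $\theta_0 := 0$), and the hypothesis gives $\theta_{k^*} > \tfrac{1}{2}$, so $x_{k^*}$ satisfies the median condition. Substituting $L = W\theta_{k^*-1}$, $R = W(1-\theta_{k^*})$, $w(x_{k^*}) = w_{k^*}$ into the displayed expressions for $\alpha$ and $\beta$, and rewriting $W\theta_{k^*-1} = \sum_{k<k^*} w_k$ and $W(1-\theta_{k^*}) = \sum_{k>k^*} w_k$, yields exactly (\ref{eq:alpha,beta}); positivity of both numbers follows from the two strict $\theta$-inequalities. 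Uniqueness is then immediate: since the weights are positive, $\theta_{\bullet}$ is strictly increasing, so $\theta_j < \tfrac{1}{2}$ for all $j<k^*$ and $\theta_{j-1} > \tfrac{1}{2}$ for all $j>k^*$, ruling out every other $x_j$; and no interval $(x_j,x_{j+1})$ can satisfy $\theta_j = \tfrac{1}{2}$ because $\theta_{\bullet}$ jumps strictly past $\tfrac{1}{2}$ at $k^*$.

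Under (\ref{eq:theta<}), any $x \in (x_{k^*}, x_{k^*+1})$ lies outside $\mathbf{X}$, so $w(x) = 0$; moreover $L(x) = W\theta_{k^*} = W/2 = W(1-\theta_{k^*}) = R(x)$, so taking $\alpha = \beta = 0$ satisfies (\ref{eq:w-median}). Hence every such $x$ is a weighted median. The only real obstacle in the argument is notational: converting the $\theta_k$-forms of $\alpha$ and $\beta$ back into the cumulative sums that appear in (\ref{eq:alpha,beta}); after that, both parts of the lemma are direct readings of the inequality $\theta_{j-1} \le \tfrac{1}{2} \le \theta_j$.
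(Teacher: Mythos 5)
Your proof is correct and rests on the same idea as the paper's: the cumulative weight fractions $\theta_k$ increase from $\theta_1$ to $\theta_N=1$, and the weighted median is located where they cross $\tfrac12$. The paper disposes of this in one sentence, whereas you make the monotonicity argument fully rigorous via the clean characterization $\theta_{j-1}\le\tfrac12\le\theta_j$ (equivalently $|R(x)-L(x)|\le w(x)$), which is a welcome elaboration but not a different route.
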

\begin{proof}
The statement holds since the sequence (\ref{eq:theta-k}) is increasing from $\theta_1=(w_1/\sum_{k=1}^N\,w_k)$ to $\theta_N=1$.
\end{proof}
\textbf{Note}: In case (\ref{eq:theta<}),
\[
\sum\,\{\vi{w}{k}:\,\vi{x}{k} \leq \vi{x}{k^*}\} = \sum\,\{\vi{w}{k}:\,\vi{x}{k} \geq  \vi{x}{k^*\!\!+\!\!1}\},
\]
we can take the median as the midpoint of $\vi{x}{k^*}$ and $\vi{x}{k^*\!\!+\!\!1}$, in order to conform with the classical definition of the median (for an even number of points of equal weight) .
\begin{lemma}
\label{lem:w-median}
Given $\mathbf{X}$ and $\mathbf{W}$ as in Definition~\ref{def:w-median}, the set of minimizers $c$ of
\[
\sum_{i=1}^N\,\vi{w}{i}\,\big|\vi{x}{i}-c\big|
\]
is the set of $\mathbf{W}$--medians of $\mathbf{X}$.
\end{lemma}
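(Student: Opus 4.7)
The plan is to exploit the fact that $f(c) := \sum_{i=1}^N w_i |x_i - c|$ is a convex, piecewise linear, coercive function of the single real variable $c$, so its minimizers form a nonempty closed interval, characterized by the condition that the subdifferential $\partial f(c)$ contains $0$. I would then simply verify that this zero-in-subdifferential condition is identical to the condition \eqref{eq:w-median} in Definition~\ref{def:w-median}.

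Concretely, I would split into two cases. First, for $c \notin \mathbf{X}$, the function $f$ is differentiable at $c$ with
\[
f'(c) \;=\; \sum_{x_i < c} w_i \;-\; \sum_{x_i > c} w_i,
\]
so $c$ is a minimizer iff $\sum_{x_i < c} w_i = \sum_{x_i > c} w_i$, which is \eqref{eq:w-median} with $\alpha = \beta = 0$, matching the definition for points off $\mathbf{X}$. Second, for $c = x_k \in \mathbf{X}$, the left and right derivatives are
\[
f'_-(x_k) \;=\; \sum_{x_i < x_k} w_i - \sum_{x_i > x_k} w_i - w_k,
\qquad
f'_+(x_k) \;=\; \sum_{x_i < x_k} w_i - \sum_{x_i > x_k} w_i + w_k,
\]
and $x_k$ is a minimizer iff $f'_-(x_k) \leq 0 \leq f'_+(x_k)$. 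I would rewrite this pair of inequalities as
\[
\sum_{x_i > x_k} w_i - \sum_{x_i < x_k} w_i \;\in\; [-w_k, w_k],
\]
and then define $\alpha := \tfrac12\bigl(w_k + \sum_{x_i > x_k} w_i - \sum_{x_i < x_k} w_i\bigr)$ and $\beta := w_k - \alpha$; the above inequality is precisely the statement that $\alpha, \beta \geq 0$, while the identity $\sum_{x_i < x_k} w_i + \alpha = \sum_{x_i > x_k} w_i + \beta$ is a direct algebraic rearrangement, so \eqref{eq:w-median} holds. The converse direction is immediate by running the same computation backwards.

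To complete the proof, I would note that by convexity the set of minimizers is an interval, so once the two cases above are established, every point of this interval is a $\mathbf{W}$-median and vice versa; in the ambiguous situation $\theta_{k^*} = \tfrac12$ of Lemma~\ref{lem:W-median}, the whole interval $[x_{k^*}, x_{k^*+1}]$ consists of minimizers, all satisfying \eqref{eq:w-median}. I do not expect any real obstacle: the only mildly delicate point is the bookkeeping at the kink $c = x_k$, where one must be careful about strict versus weak inequalities in the sums $\sum_{x_i < x_k}$ and $\sum_{x_i > x_k}$, and about distributing the mass $w_k$ between $\alpha$ and $\beta$ so as to land exactly on the definition of a weighted median.
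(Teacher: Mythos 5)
Your proof is correct, but it takes a genuinely different route from the paper's. The paper argues by reduction: the claim is taken as known for unit weights, extended to integer weights by replacing a point of weight $w_i$ with $w_i$ coincident unit-weight points, then to rational weights by scaling, and finally to real weights by rational approximation. Your argument instead works directly with the convex, piecewise-linear objective and characterizes its minimizers via the one-sided derivatives (equivalently, $0\in\partial f(c)$), then matches that characterization term-by-term with the defining condition \eqref{eq:w-median} of a weighted median, exhibiting the required $\alpha$ and $\beta$ explicitly. What each buys: the paper's reduction is very short and leans on the classical unweighted median fact, but its final limiting step is left implicit --- one would still need to justify that minimizer sets and the weighted-median condition pass to the limit under rational approximation of the weights, which is not entirely automatic since the median set can change discontinuously as weights vary. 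Your approach is self-contained, elementary, and rigorous in one pass, and as a bonus it recovers the explicit formulas for $\alpha$ and $\beta$ that appear in \eqref{eq:alpha,beta} of Lemma~\ref{lem:W-median}. The only bookkeeping point to watch (present in the paper's Definition~\ref{def:w-median} as well) is that if several data points coincide at $c=x_k$, the quantity playing the role of $w_k$ in your kink computation must be the total weight of all points located at $x_k$; with that reading your case analysis goes through unchanged.
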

\begin{proof}
The result is well known if all weights are 1. If the weights are integers, consider a point $\vi{x}{i}$ with weight $\vi{w}{i}$ as $\vi{w}{i}$ coinciding points of weight 1 and the result follows. Same if the weights are rational. Finally, if the weights are real, consider their rational approximations.
\end{proof}
\section{Probabilistic approximation of (L.$K$)}
\label{sec:Approximation}

We relax the assignment problem in (\textbf{L}.$K$) of \S~\ref{subsec:Clustering problem} by using a \textbf{continuous approximation} as follows,
\begin{equation}
\min\,
\sum_{k=1}^K\,\sum_{i=1}^N\,\vi{w}{i}\,\vi{p}{k}(\vi{\x}{i})\,d(\vi{\x}{i},\vi{\Bc}{k})
\tag{\textbf{P}.$K$}
\end{equation}
with two sets of variables,

the \textbf{centers} $\{\vi{\Bc}{k}\}$, and

the \textbf{cluster membership probabilities} $\{\vi{p}{k}(\vi{\x}{i})\}$,
\begin{equation}
\vi{p}{k}(\vi{\x}{i}):=\text{Prob}\,\{\vi{\x}{i}\in\vi{\mathbf{X}}{k}\},\ i\in\itoj{1}{N},\,k\in\itoj{1}{K},
\label{eq:membership_probabilities}
\end{equation}

Because the probabilities
 $\{\vi{p}{k}(\vi{\x}{i})\}$ add to 1 for each $i\in\itoj{1}{N}$,  the objective function of (\textbf{P}.$K$) is an upper bound on the
optimal value of (\textbf{L}.$K$),
\begin{equation}
\sum_{k=1}^K\,\sum_{i=1}^N\,\vi{w}{i}\,\vi{p}{k}(\vi{\x}{i})\,d(\vi{\x}{i},\vi{\Bc}{k}) \geq \min\,(\text{\textbf{L}}.K),
\label{eq:f>P}
\end{equation}
and therefore so is the optimal value of (\textbf{P}.$K$),
\begin{equation}
\min\,(\text{\textbf{P}}.K) \geq \min\,(\text{\textbf{L}}.K).
\label{eq:P>L}
\end{equation}
\section{Axioms for probabilistic distance clustering}
\label{sec:Axioms}
In this section, $d_k(\x)$ stands for $d_k(\x,\Bc_k)$, the distance of $\x$ to the center $\Bc_k$ of the $\ith{k}{th}$--cluster,  $k\in\itoj{1}{K}$. To simplify notation, the point $\x$ is assumed to have weight $w=1$.

The \textbf{cluster membership probabilities} $\{p_k(\x):k\in\ovl{1,\!K}\}$ of a point $\x$ depend only on the \textbf{distances} $\{d_k(\x):\,k\in\ovl{1,\!K}\}$,
\begin{equation}
\mathbf{p}(\x)=\mathbf{f}(\mathbf{d}(\x))
\label{eq:p=f(d)}
\end{equation}
where $\mathbf{p}(\x)\in\R^K$ is the vector of probabilities $(p_k(\x))$, and $\mathbf{d}(\x)$ is the vector of distances $(d_k(\x))$. Natural assumptions for the relation (\ref{eq:p=f(d)}) include
\begin{subequations}
\begin{align}
d_i(\x)<d_j(\x) &\Longrightarrow\ p_i(\x)>p_j(\x),\ \text{for all}\ i,j\in\ovl{1,\!K}
\label{eq:properties-a}
\\
\mathbf{f}(\lambda\,\mathbf{d}(\x)) &=\mathbf{f}(\mathbf{d}(\x)),\ \text{for any}\ \lambda>0
\label{eq:properties-b}
\\
Q\,\mathbf{p}(\x)&=\mathbf{f}(Q\,\mathbf{d}(\x)),\ \text{for any permutation matrices}\ Q
\label{eq:properties-c}
\end{align}
\label{eq:properties}
\end{subequations}
Condition (\ref{eq:properties-a}) states that membership in a cluster is more probable the closer it is, which is Assumption (A) of \S~\ref{subsec:Approximation}.
The meaning of (\ref{eq:properties-b}) is that the probabilities $p_k(\x)$ do not depend on the scale of measurement, i.e., $\mathbf{f}$ is homogeneous of degree 0. It follows that the probabilities $p_k(\x)$ depend only on the ratios of the distances $\{d_k(\x):\,k\in\ovl{1,\!K}\}$.

The symmetry of $\mathbf{f}$, expressed by  (\ref{eq:properties-c}), guarantees for each $k\in\ovl{1,K}$, that the probability $p_k(\x)$ does not depend on the numbering of the other clusters.

Assuming continuity of $\mathbf{f}$ it follows from (\ref{eq:properties-a}) that
\[
d_i(\x)=d_j(\x)\ \Longrightarrow\ p_i(\x)=p_j(\x),
\]
for any $i,j\in\ovl{1,K}$. 

For any nonempty subset $\mathcal{S}\subset\ovl{1,K}$, let
\[
p_\mathcal{S}(\x)=\sum_{s\in\mathcal{S}}\,p_s(\x),
\]
the probability that $\x$ belongs to one of the clusters $\{\mathcal{C}_s:\,s\in\mathcal{S}\}$, and let $p_k(\x|\mathcal{S})$ denote the \textbf{conditional probability} that $\x$ belongs to the cluster $\mathcal{C}_k$, given that it belongs to one of the clusters $\{\mathcal{C}_s:\,s\in\mathcal{S}\}$.

Since the probabilities $p_k(\x)$ depend only on the ratios of the distances $\{d_k(\x):\,k\in\ovl{1,\!K}\}$, and these ratios are unchanged in subsets $\mathcal{S}$ of the index set $\ovl{1,\!K}$, it follows that for all $k\in\ovl{1,K},\ \emptyset\neq\mathcal{S}\subset\ovl{1,\!K}$,
\begin{equation}
p_k(\x) = p_k(\x|\mathcal{S})\,p_\mathcal{S}(\x)
\label{eq:LCA}
\end{equation}
which is the \textbf{choice axiom} of Luce, \cite[Axiom 1]{LUCE-59}, and therefore, \cite{YELLOT},
\begin{equation}
p_k(\x|\mathcal{S})=\frac{v_k(\x)}{\sum\limits_{s\in\mathcal{S}}\,v_s(\x)}
\label{eq:v(S)}
\end{equation}
where $v_k(\x)$ is a scale function, in particular,
\begin{equation}
p_k(\x)=\frac{v_k(\x)}{\sum\limits_{s\in\ovl{1,\!K}}\,v_s(\x)}\;.
\label{eq:v(x)}
\end{equation}
Assuming $v_k(\x)\neq 0$ for all $k$, it follows that
\begin{equation}
p_k(\x)v_k(\x)^{-1} = \frac{1}{\sum\limits_{s\in\ovl{1,\!K}}\,v_s(\x)},
\label{eq:pv}
\end{equation}
where the right hand side is a function of $\x$, and does not depend on $k$.

 Property (\ref{eq:properties-a}) implies that the function $v_k(\cdot)$ is a monotone decreasing function of $d_k(\x)$.

\section{Cluster membership probabilities as functions of distance}
\label{sec:Probabilistic memberships}
Given $K$ centers $\{\vi{\Bc}{k}\}$, and a point $\x$ with weight $w$ and distances $\{d(\x,\vi{\Bc}{k}):k\in\itoj{1}{K}\}$ from these centers, a simple choice for the function $v_k(\x)$ in (\ref{eq:v(S)}) is
\begin{equation}
  v_k(\x)=\frac{1}{w\,d_k(\x)}\;,
  \label{eq:v=1/d}
  \end{equation}
for which (\ref{eq:pv}) gives\footnote{There are other ways to model Assumption (\textbf{A}), e.g. \cite{BI-IY-08}, but the simple model (\ref{eq:pd=D}) works well enough for our purposes.},
\begin{equation}
w\,\vi{p}{k}(\x)\,d(\x,\vi{\Bc}{k})=D(\x),\ k\in\itoj{1}{K},
\label{eq:pd=D}
\end{equation}
where the function $D(\x)$, called the \textbf{joint distance function} (JDF) at $\x$, does not depend on $k$.

For a given point $\x$ and given centers $\{\Bc_k\}$, equations (\ref{eq:pd=D}) are optimality conditions for the extremum problem
\begin{equation}
\min\,\left\{w \sum_{k=1}^K\,p_k^2\,d(\x,\Bc_k):\,\sum_{k=1}^K\,p_k=1,\ p_k \geq 0,\
k\in\itoj{1}{K}\right\}
\label{eq:E}
\end{equation}
in the probabilities $\{\vi{p}{k} := \vi{p}{k}(\x)\}$. The squares of probabilities in the objective of
(\ref{eq:E}) serve to smooth the underlying non--smooth problem, see the seminal paper by Teboulle \cite{TEBOULLE-07}. Indeed, (\ref{eq:pd=D}) follows by differentiating the Lagrangian
\begin{equation}
L(\mathbf{p},\lambda)= w\,\sum_{k=1}^K\,\vi{p}{k}^2\,d(\x,\vi{\Bc}{k}) + \lambda\,\left(\sum_{k=1}^K\,\vi{p}{k}-1\right),
\label{eq:Lagrangian}
\end{equation}
with respect to $\vi{p}{k}$ and equating the derivative to zero.

Since probabilities add to one we get from (\ref{eq:pd=D}),
\begin{equation}
p_{\displaystyle k}(\x)=\frac{\prod\limits_{j\neq k}\,d(\x,\vi{\Bc}{j})}
{\sum\limits_{\ell=1}^K\,\prod\limits_{m\neq \ell}\,d(\x,\vi{\Bc}{m})},\ k\in\itoj{1}{K},
\label{eq:p(x)}
\end{equation}
and the JDF at $\x$,
\begin{equation}
D(\x)=w\,\frac{\prod\limits_{j=1}^K\,d(\x,\vi{\Bc}{j})}
{\sum\limits_{\ell=1}^K\,\prod\limits_{m\neq \ell}\,d(\x,\vi{\Bc}{m})}\;,
\label{eq:D(x)}
\end{equation}
which is (up to a constant) the \textbf{harmonic mean} of the distances $\{d(\x,\vi{\Bc}{k}):k\in\itoj{1}{K}\}$, see also (\ref{eq:K-D(x)}) below.

Note that the probabilities $\{\vi{p}{k}(\x):\,k\in\itoj{1}{K}\}$ are determined by the centers $\{\vi{\Bc}{k}:\,k\in\itoj{1}{K}\}$ alone, while the function $D(\x)$ depends also on the weight $w$.
For example, in case $K=2$,
\begin{subequations}
\begin{gather}
\vi{p}{1}(\x)=\frac{d(\x,\vi{\Bc}{2})}{d(\x,\vi{\Bc}{1})+d(\x,\vi{\Bc}{2})},\
\vi{p}{2}(\x)=\frac{d(\x,\vi{\Bc}{1})}{d(\x,\vi{\Bc}{1})+d(\x,\vi{\Bc}{2})},
\label{eq:p(x)-2}
\\
D(\x)=w\,\frac{d(\x,\vi{\Bc}{1})\,d(\x,\vi{\Bc}{2})}{d(\x,\vi{\Bc}{1})+d(\x,\vi{\Bc}{2})}.
\label{eq:D(x)-2}
\end{gather}
\end{subequations}

\section{Computation of centers}
\label{sec:Centers}

We use the $\ell_1$--distance (\ref{eq:L1-distance}) throughout.
The objective function of (\textbf{P}.$K$)  is a separable function of the cluster centers,
\begin{subequations}
\begin{align}
f(\vi{\Bc}{1},\ldots,\vi{\Bc}{K}) &
:=\sum\limits_{k=1}^K\,\vi{f}{k}(\vi{\Bc}{k}),\label{eq:f(centers)}\\
\text{where}\quad
\vi{f}{k}(\Bc) &:=\sum\limits_{i=1}^N\,\vi{w}{i}\,\vi{p}{k}(\vi{\x}{i})\,\vi{d}{1}(\vi{\x}{i},\Bc),\quad k\in\itoj{1}{K}.\label{eq:f(c_k)}
\end{align}
\label{eq:f}
\end{subequations}
The centers problem thus separates into $K$ problems of type (\ref{eq:L1-ell1}),
\begin{equation}
\min_{\displaystyle{\Bc_k}\in\R^n}\,\sum_{i=1}^N\,\vi{w}{i}\,\vi{p}{k}(\vi{\x}{i})\,\sum_{j=1}^n\,
\big|\vij{\x}{i}{j}-\vij{\Bc}{k}{j}\big|,\ k\in\itoj{1}{K},
\label{eq:L1-k}
\end{equation}
coupled by the probabilities $\{\vi{p}{k}(\vi{\x}{i})\}$.
Each of these problems separates into $n$ problems of type  (\ref{eq:L1-j}) for the components $\vij{\Bc}{k}{j}$,
\begin{equation}
\min_{\displaystyle{c_k[j]\in\R}}\,\sum_{i=1}^N\,\vi{w}{i}\,\vi{p}{k}(\vi{\x}{i})\,
\big|\vij{\x}{i}{j}-\vij{\Bc}{k}{j}\big|,\ k\in\itoj{1}{K},\ j\in\itoj{1}{n},
\label{eq:LP-w-j}
\end{equation}
whose solution, by Lemma~\ref{lem:w-median}, is a weighted median of the points $\{\vij{\x}{i}{j}\}$ with weights $\{\vi{w}{i}\,\vi{p}{k}(\vi{\x}{i})\}$.
%
\section{Power probabilities}
\label{sec:Power probabilities}
The cluster membership probabilities $\{p_k(\x):\,k\in\itoj{1}{K}\}$ of a point $\x$ serve to relax the rigid assignment of $\x$ to any of the clusters, but eventually it may be necessary to produce such an assignment. One way to achieve this is  to raise the membership probabilities $p_k(\x)$ of (\ref{eq:p(x)}) to a power $\nu \geq 1$, and normalize, obtaining the \textbf{power probabilities}
\begin{align}
\vi{p}{k}^{(\nu)}(\x) &:=\frac{\vi{p}{k}^{\nu}(\x)}{\sum\limits_{j=1}^K\,\vi{p}{j}^{\nu}(\x)},
\label{eq:p(x)-nu-normalized}
\\
\intertext{which, by (\ref{eq:p(x)}), can also be expressed in terms of the distances $d(\x,\vi{\Bc}{k})$,}
\vi{p}{k}^{(\nu)}(\x) &:=\frac{\prod\limits_{j\neq k}\,d(\x,\vi{\Bc}{j})^\nu}
{\sum\limits_{\ell=1}^K\,\prod\limits_{m\neq \ell}\,d(\x,\vi{\Bc}{m})^\nu},\ k\in\itoj{1}{K}.
\label{eq:p(x)-nu}
\end{align}

As the exponent $\nu$ increases the power probabilities $\vi{p}{k}^{(\nu)}(\x)$ tend to hard assignments: If $M$ is the index set of maximal probabilities, and $M$ has $\#M$ elements, then,
\begin{equation}
\lim_{\nu\to\infty}\,\vi{p}{k}^{(\nu)}(\x)=\left\{
                                           \begin{array}{cl}
                                             \frac{1}{\#M}, & \hbox{if $k\in M$;} \\
                                             0, & \hbox{otherwise,}
                                           \end{array}
                                         \right.
\label{eq:limit}
\end{equation}
and the limit is a hard assignment if $\#M=1$, i.e. if the maximal probability is unique.

Numerical experience suggests an increase of $\nu$ at each iteration, see, e.g., (\ref{eq:nu+}) below.
\section{Algorithm PCM($\ell_1$): Probabilistic Clustering Method with $\ell_1$ distances}
\label{sec:Algorithm}
The problem (\textbf{P}.$K$) 
is solved iteratively, using the following updates in succession.


\textbf{Probabilities computation}: Given $K$ centers $\{\vi{\Bc}{k}\}$, the assignments probabilities $\{p_k^{(\nu)}(\vi{\x}{i})\}$ are calculated using (\ref{eq:p(x)-nu}). The exponent $\nu$ is updated at each iteration, say by a constant increment $\Delta \geq 0$,
\begin{equation}
\nu:=\nu+\Delta
\label{eq:nu+}
\end{equation}
starting with an initial $\nu_0$.

\textbf{Centers computation}: Given the assignment probabilities $\{p_k^{(\nu)}(\vi{\x}{i})\}$, the problem (\textbf{P}.$K$) separates into $K n$ problems of type (\ref{eq:LP-w-j}),
\begin{equation}
\min_{\Bc_k[j]\in\R}\,\sum_{i=1}^N\,\vi{w}{i}\,p^{(\nu)}_{\displaystyle k}(\vi{\x}{i})\,
\big|\vij{\x}{i}{j}-\vij{\Bc}{k}{j}\big|,\ k\in\itoj{1}{K},\ j\in\itoj{1}{n},
\label{eq:LP-w-j.nu}
\end{equation}
one for each component $\vij{\Bc}{k}{j}$ of each center $\vi{\Bc}{k}$, that are solved by Lemma~\ref{lem:w-median}.


These results are presented in an algorithm form as follows.
\medskip\\
\textbf{Algorithm PCM($\ell_1$)}
: An algorithm for the $\ell_1$ clustering problem\\
\rule{120mm}{0.2mm}
\begin{tabbing}
xxxxxxx \= xxxxxxxxxxxxxxxxxxxxxxxxxxxxxxxxxxxx \kill
\textbf{Data}: \>  $\mathbf{X}=\{\vi{\x}{i}:\,i\in\itoj{1}{N}\}$ data points,\  $\{\vi{w}{i}:\,i\in\itoj{1}{N}\}$ their weights,\\
\> $K$ the number of clusters,\\
\> $\epsilon > 0$ (stopping criterion),\\
\> $\nu_0 \geq 1$ (initial value of the exponent $\nu$),\ $\Delta > 0$ (the increment in (\ref{eq:nu+}).)\\[2mm]
\pushtabs
\textbf{Initialization}: $K$ arbitrary centers $\{\vi{\Bc}{k}:\,k\in\itoj{1}{K}\}$, $\nu:=\nu_0$.\\[2mm]
\textbf{Iteration}:\\[2mm]
\poptabs
Step 1 \>  \textbf{compute} distances $\{\vi{d}{1}(\x,\vi{\Bc}{k}):\,k\in\itoj{1}{K}\}$ for
all $\x\in\mathbf{X}$\\[2mm]
Step 2 \>  \textbf{compute} the assignments $\{p^{(\nu)}_{\displaystyle k}(\x):\,\x\in\mathbf{X},\ k\in\itoj{1}{K}\}$
(using (\ref{eq:p(x)-nu}))\\[2mm]
Step 3 \> \textbf{compute} the new centers $\{\vi{\Bc}{k+}:\,k\in\itoj{1}{K}\}$
(applying Lemma~\ref{lem:w-median} to (\ref{eq:LP-w-j.nu}))\\[2mm]
Step 4 \> \textbf{if}
$\sum\limits_{k=1}^K\,\vi{d}{1}(\vi{\Bc}{k+},\vi{\Bc}{k})< \epsilon$\quad \textbf{stop}\\[2mm]
\> \textbf{else} $\nu:=\nu+\Delta$ , \textbf{return} to step 1
 \end{tabbing}
 \rule{120mm}{0.2mm}
\begin{corollary}
\label{cor:Time}
The running time of Algorithm PCM($\ell_1$) is
\begin{equation}
O(NK(K^2 + n)I),
\label{eq:Time}
\end{equation} where $n$ is the dimension of the space, $N$ the number of points, $K$ the number of clusters, and $I$ is the number of iterations.
\end{corollary}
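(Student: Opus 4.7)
The plan is to bound the arithmetic cost of each of the four steps of a single iteration of PCM($\ell_1$) and then multiply by the number of iterations $I$. For Step 1, each $\ell_1$--distance $d_1(\x_i,\Bc_k)=\sum_{j=1}^n|\vij{\x}{i}{j}-\vij{\Bc}{k}{j}|$ is computed in $O(n)$ arithmetic operations, and there are $NK$ such distances, giving a total of $O(NKn)$.

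For Step 2, the power probabilities are computed from formula (\ref{eq:p(x)-nu}). A direct evaluation for a single pair $(i,k)$ assembles the numerator $\prod_{j\neq k}d(\x_i,\Bc_j)^\nu$ in $O(K)$ multiplications, together with the denominator $\sum_\ell\prod_{m\neq\ell}d(\x_i,\Bc_m)^\nu$, which is a sum of $K$ products of length $K-1$ and hence costs $O(K^2)$ operations. Thus each of the $NK$ probabilities costs $O(K^2)$, and Step 2 costs $O(NK^3)$ in total.

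For Step 3, I would invoke the decomposition (\ref{eq:LP-w-j.nu}), which splits the centers update into $Kn$ independent one--dimensional weighted--median problems over $N$ weighted points, each solved via Lemma~\ref{lem:w-median}. Since a weighted median of $N$ weighted reals can be located in $O(N)$ time by a linear--time selection algorithm (a weighted variant of Quickselect), Step 3 costs $O(KnN)$. Step 4 then compares $K$ centers in $\R^n$ at cost $O(Kn)$.

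Adding the four contributions yields a per--iteration cost of $O(NKn+NK^3+KnN+Kn)=O(NK(K^2+n))$, and multiplying by $I$ gives the claimed bound. The one place where care is needed is the weighted--median computation in Step 3: a naive sort--based implementation would inject an extra $\log N$ factor, so the argument must rely on a linear--time weighted--median routine in order to attain the stated bound.
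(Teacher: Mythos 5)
Your proposal is correct and follows essentially the same step-by-step accounting as the paper: $O(NKn)$ for distances, $O(NK^3)$ for the probabilities, $O(NKn)$ for the $Kn$ weighted medians, and $O(Kn)$ for the stopping test, summed and multiplied by $I$. Your added caveat that Step 3 requires a linear-time weighted-median (selection-based) routine rather than a sort is a point the paper's proof asserts without comment, so your version is, if anything, slightly more careful.
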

\begin{proof}
The number of operations in an iteration is calculated as follows:

Step 1: $O(nNK)$, since computing the $\ell_1$ distance between two $n$-dimensional vectors takes $O(n)$ time, and there are $N\,K$ distances between all points and all centers.

Step 2: $O(NK^3)$, there are $NK$ assignments, each taking $O(K^2)$.

Step 3: $O(nNK)$, computing the weighted median of $N$ points in $\R$ takes $O(N)$ time, and $K\,n$ such medians are computed.

Step 4: $O(nK)$, since there are $K$ cluster centers of dimension $n$.

The corollary is proved by combining the above results.
\end{proof}
\begin{remark}
\label{rem:Cor}\ \\
(a) The result (\ref{eq:Time}) shows that Algorithm~PCM($\ell_1$) is linear in $n$, which in high--dimensional data is much greater than $N$ and $K$.\\
(b) The first few iterations of the algorithm come close to the final centers, and thereafter the iterations are slow, making the stopping rule in Step 4 ineffective. A better stopping rule is a bound on the number of iterations $I$, which can then be taken as a constant in (\ref{eq:Time}).\\
(c) Algorithm~PCM($\ell_1$) can be modified to account for very unequal cluster sizes, as in \cite{IY-BI-08}. This modification did not significantly improve the performance of the algorithm in our experiments.\\
(d)
The centers here are computed from scratch at each iteration using the current probabilities, unlike the Weiszfeld method  \cite{Weiszfeld-37} or its generalizations, \cite{IY-BI-10}--\cite{IY-BI-11},  where the centers are updated at each iteration.
\end{remark}
\section{Monotonicity}
\label{sec:Monotone}
The centers computed iteratively by Algorithm~PCM($\ell_1$) 
 are confined to the convex hull of $\mathbf{X}$, a compact set, and therefore a subsequence converges to an optimal solution of the approximate problem (\textbf{P}.$K$), that in general is not an optimal solution of the original problem (\textbf{L}.$K$).

The JDF of the data set $\X$ is defined as the sum of the JDF's of its points,
\begin{equation}
D(\mathbf{X}):=\sum\limits_{\mathbf{x}\in \mathbf{X}}\,D(\x).
\label{eq:D(X)}
\end{equation}
We prove next a monotonicity result for $D(\X)$.
\begin{theorem}
\label{th:Monotone}
The function $D(\mathbf{X})$
decrease along any sequence of iterates of centers.
\end{theorem}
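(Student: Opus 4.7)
The plan is a majorize--minimize argument built on the variational characterization of the JDF implicit in problem (\ref{eq:E}). Differentiating the Lagrangian (\ref{eq:Lagrangian}) shows that
$$D(\x;\Bc)\ =\ \min_{q_k\ge 0,\ \sum_k q_k=1}\ w(\x)\sum_{k=1}^{K}q_k^{2}\,d_1(\x,\Bc_k),$$
with minimizer $\mathbf{q}=\mathbf{p}(\x;\Bc)$ from (\ref{eq:p(x)}). For any other centers $\Bc'$ and any probability vector $\mathbf{q}$ this yields the pointwise majorization $D(\x;\Bc')\le w(\x)\sum_{k}q_k^{2}d_1(\x,\Bc_{k}')$, with equality at $\Bc'=\Bc$ and $\mathbf{q}=\mathbf{p}(\x;\Bc)$.

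Now fix the current iterate $\tilde{\Bc}$, write $\tilde p_k(\x):=p_k(\x;\tilde{\Bc})$, and let $\Bc^{+}$ be the iterate returned by Step 3 of Algorithm~PCM($\ell_1$). Instantiating the bound at $\mathbf{q}=\tilde{\mathbf{p}}(\x)$ and $\Bc'=\Bc^{+}$ and summing over $\x\in\X$ gives
$$D(\X;\Bc^{+})\ \le\ G(\Bc^{+}),\qquad G(\Bc)\ :=\ \sum_{\x\in\X}w(\x)\sum_{k=1}^{K}\tilde p_k(\x)^{2}\,d_1(\x,\Bc_k),$$
while the equality clause at $\Bc=\tilde{\Bc}$ yields $G(\tilde{\Bc})=D(\X;\tilde{\Bc})$. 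So monotonicity reduces to the surrogate inequality $G(\Bc^{+})\le G(\tilde{\Bc})$, which, since $G$ separates as $\sum_{k} G_k(\Bc_k)$ with $G_k(\Bc_k)=\sum_\x w(\x)\tilde p_k(\x)^{2}d_1(\x,\Bc_k)$ and the $\ell_1$-distance further separates across coordinates, is a finite collection of one-dimensional weighted-median comparisons of the type treated in (\ref{eq:L1-j}).

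To close the argument I would use (\ref{eq:pd=D}) to rewrite the quadratic weight $w(\x)\tilde p_k(\x)^{2}$ as the product $\tilde p_k(\x)\cdot D(\x;\tilde{\Bc})/d_1(\x,\tilde{\Bc}_k)$. This expresses the $\tilde p_k^{\,2}$-weights as the $\tilde p_k$-weights (the ones the algorithm actually uses in (\ref{eq:LP-w-j})) multiplied by a per-point factor that depends only on $\tilde{\Bc}$, not on the new centers. Then, invoking Lemma~\ref{lem:W-median} and formula (\ref{eq:alpha,beta}) for the weighted median coordinate by coordinate, one can argue that the algorithm's Step 3 update, which is optimal for the $\tilde p_k$-weighted problem, also decreases each $G_k$; combined with $D(\X;\Bc^{+})\le G(\Bc^{+})$ this yields $D(\X;\Bc^{+})\le G(\Bc^{+})\le G(\tilde{\Bc})=D(\X;\tilde{\Bc})$.

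The main obstacle is precisely this bridging step, since in general the weighted median of a point set under one weighting need not minimize the weighted $\ell_1$ objective under a different weighting. The structural identity $w(\x)\tilde p_k(\x)d_1(\x,\tilde{\Bc}_k)=D(\x;\tilde{\Bc})$ supplied by (\ref{eq:pd=D}) is what makes the two weight systems comparable: it pins the per-point ratio of the $\tilde p_k^{\,2}$-weights to the $\tilde p_k$-weights to quantities fixed at the start of the iteration, independent of $\Bc^{+}$, and this is the technical lever that must carry the argument from the decrease of the linear-weighted objective solved by Step 3 to the decrease of the quadratic surrogate $G$.
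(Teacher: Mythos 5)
Your first two paragraphs are exactly the paper's argument: using (\ref{eq:E}) you write $D(\x;\Bc)=\min_{\mathbf q}w(\x)\sum_k q_k^2\,d_1(\x,\Bc_k)$, deduce $D(\X;\Bc^{+})\le G(\Bc^{+})$ and $G(\tilde\Bc)=D(\X;\tilde\Bc)$, and reduce the theorem to the surrogate decrease $G(\Bc^{+})\le G(\tilde\Bc)$. The paper closes this last step in one line by asserting that the new centers minimize (\ref{eq:sum-w-p^2-d}) for the given probabilities --- i.e.\ it treats the center update as the coordinatewise weighted median with weights $w(\x)\,\tilde p_k(\x)^2$. Under that reading the proof is a standard two--block alternating minimization and is complete; no reweighting trick is needed.

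The gap is in your bridging step, and you concede it yourself. Writing $w(\x)\tilde p_k(\x)^2=\tilde p_k(\x)\cdot D(\x;\tilde\Bc)/d_1(\x,\tilde\Bc_k)$ via (\ref{eq:pd=D}) shows that the per--point ratio between the quadratic weights $w\,\tilde p_k^2$ and the linear weights $w\,\tilde p_k$ is $\tilde p_k(\x)$, which \emph{varies with} $\x$; it is ``fixed at the start of the iteration'' but that is not the relevant property. A weighted median is not preserved under point--dependent reweighting, so optimality of $\Bc^{+}$ for the $w\,\tilde p_k$--weighted (or, as in (\ref{eq:LP-w-j.nu}), the $w\,p_k^{(\nu)}$--weighted) problem does not imply $G(\Bc^{+})\le G(\tilde\Bc)$, and Lemma~\ref{lem:W-median} with (\ref{eq:alpha,beta}) gives you no handle on the comparison. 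As written, your proof therefore does not establish the theorem. To repair it you must either (i) take the center step to be the weighted median with weights $w(\x)\,\tilde p_k(\x)^2$, in which case the argument closes immediately and coincides with the paper's, or (ii) prove the surrogate decrease for the weights the algorithm actually uses, which your manipulation does not do. It is worth noting that the paper's own one--line assertion silently makes choice (i), so your honesty about the obstacle exposes a looseness in the published proof rather than introducing a new error of approach; but a gap acknowledged is still a gap.
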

\begin{proof}
The function $D(\X)$ can be written as
\begin{align}
D(\mathbf{X}) &:= \sum_{\mathbf{x}\in \mathbf{X}}\,\left(\sum_{k=1}^K\,p_k(\x)\right)\,D(\x),\ \text{since the probabilities add to 1},
\notag\\
&=\sum_{\mathbf{x}\in \mathbf{X}}\,\sum_{k=1}^K\,w(\mathbf{x})\,p_k(\mathbf{x})^2\,d_1(\x,\vi{\Bc}{k}),\ \text{by (\ref{eq:pd=D})}.
\label{eq:sumx-w-p^2-d}
\end{align}
The proof is completed by noting that, for each $\x$, the probabilities $\{p_k(\x):k\in\itoj{1}{K}\}$ are chosen as to minimize the function
\begin{equation}
\sum_{k=1}^K\,w(\x)\,p_k(\x)^2\,d_1(\mathbf{x},\vi{\Bc}{k})
\label{eq:sum-w-p^2-d}
\end{equation}
for the given centers, see (\ref{eq:E}), and the centers $\{\vi{\Bc}{k}:k\in\itoj{1}{K}\}$ minimize
the function (\ref{eq:sum-w-p^2-d}) for the given probabilities.
\end{proof}
\begin{remark}
\label{rem:nu}The function $D(\mathbf{X})$ also decreases if the exponent $\nu$ is increased in (\ref{eq:p(x)-nu-normalized}), for then shorter distances are becoming more probable in (\ref{eq:sumx-w-p^2-d}).
\end{remark}
\section{Conclusions}
\label{sec:Conclusion}
In summary, our approach has the following advantages.
\begin{enumerate}
\item In numerical experiments, see Appendix B, Algorithm PCM($\ell_1$) outperformed the fuzzy clustering $\ell_1$--method, the $K$--means $\ell_1$ method, and the generalized Weiszfeld method \cite{IY-BI-10}.
\item The solutions  of (\ref{eq:E}) are less sensitive to outliers than the solutions of (\ref{eq:FCM}), which uses squares of distances.
\item The probabilistic principle (\ref{eq:pd=f}) allows using other monotonic functions, in particular the exponential function $\phi(d)=e^d$, that gives sharper results, and requires only that every distance $d(\x,\Bc)$ be replaced by $\exp\,\{d(\x,\Bc)\}$, \cite{BI-IY-08}.
\item The JDF (\ref{eq:D(X)}) of the data set, provides a guide to the ``right'' number of clusters for the given data, \cite{BI-IY-10}.
\end{enumerate}

\end{spacing}

\renewcommand{\theequation}{A-\arabic{equation}}
  \setcounter{equation}{0}  
\appendix
\begin{center}\textbf{Appendix A: Relation to previous work}\end{center}
Our work brings together ideas from four different areas: inverse distance weighted interpolation, fuzzy clustering, subjective probability, and optimality principles.

\textbf{1. Inverse distance weighted} (or \textbf{IDW}) \textbf{interpolation} was introduced in 1965 by Donald Shepard, who published his results \cite{SHEPARD-68} in 1968. Shepard, then an undergraduate at Harvard, worked on the following problem:

\emph{A function $u:\R^n\to\R$ is evaluated at $K$ given points $\{\x_k:k\in\itoj{1}{K}\}$ in $\R^n$, giving the values $\{u_k:k\in\itoj{1}{K}\}$, respectively. These values are the only information about the function. It is required to estimate $u$ at any point $\x$.}

Examples of such functions include rainfall in meteorology, and  altitude in topography. The point $\x$ cannot be too far from the data points, and ideally lies in their convex hull.

Shepard estimated the value $u(\x)$ as a convex combination of the given values $u_k$,
\begin{align}
u(\x)&=\sum_{k=1}^K\,\lambda_k(\x)\,u_k
\label{eq:Shep-1}
\\
\intertext{where the weights $\lambda_k(\x)$ are inversely ptoportional to the distances $d(\x,\x_k)$ between $\x$ and $\x_k$, say}
u(\x) &= \sum_{k=1}^K\,\left(\dfrac{\dfrac{1}{d(\x,\x_k)}}{\sum\limits_{j=1}^K\,\dfrac{1}{d(\x,\x_j)}}\right)\,u_k
\label{eq:Shep-2}
\\
\intertext{giving the weights}
\lambda_k(\x)&=\dfrac{\prod\limits_{j\neq k}\,d(\x,\x_j)}{\sum\limits_{\ell=1}^K\,\prod\limits_{m\neq \ell}\,d(\x,\x_m)}
\label{eq:Shep-3}
\end{align}
that are identical with the probabilities (\ref{eq:p(x)}), if the data points are identified with the centers. IDW interpolation is used widely in spatial data analysis, geology, geography, ecology and related areas.

Interpolating the $K$ distances $d(\x,\x_k)$, i.e. taking $u_k=d(\x,\x_k)$ in (\ref{eq:Shep-2}), gives
\begin{equation}
K\,\dfrac{\prod\limits_{j=1}^K\,d(\x,\x_j)}{\sum\limits_{\ell=1}^K\,\prod\limits_{m\neq \ell}\,d(\x,\x_m)}
\label{eq:K-D(x)}
\end{equation}
the harmonic mean of the distances $\{d(\x,\x_k):\,k\in\itoj{1}{K}\}$, which is the JDF in (\ref{eq:D(x)}) multiplied by a scalar.

The harmonic mean pops up in several areas of spatial data analysis. In 1980 Dixon and Chapman \cite{DIXON-CHAPMAN-80} posited that the \emph{home--range} of a species is a contour of the harmonic mean of the areas it frequents, and this has since been confirmed for  hundreds of species. The importance of the harmonic mean in clustering was established by Teboulle \cite{TEBOULLE-07}, Stanforth, Kolossov and Mirkin \cite{SANFORTH-07}, Zhang, Hsu, and Dayal \cite{ZHANG}--\cite{ZHANG-PATENT}, Ben--Israel and Iyigun \cite{BI-IY-08} and others. Arav \cite{ARAV-07} showed the harmonic mean of distances to satisfy a system of reasonable axioms for contour approximation of data.

\textbf{2. Fuzzy clustering} introduced by J.C. Bezdek in 1973, \cite{BEZDEK-73}, is a relaxation of the original problem, replacing
the hard assignments of points to clusters by soft, or fuzzy, assignments of points simultaneously to all clusters, the strength of association of $\vi{\x}{i}$ with the $\ith{k}{th}$ cluster is measured by $w_{ik}\in [0,1]$.

 In the fuzzy c--means (FCM) method \cite{BEZDEK-81} the centers $\{\vi{\Bc}{k}\}$ are computed by
 \begin{equation}
 \min\,\sum_{i=1}^N\,\sum_{k=1}^K\,w_{ik}^m\,\|\vi{\x}{i}-\vi{\Bc}{k}\|_2^2,
 \label{eq:FCM}
 \end{equation}
 where the weights $x_{ik}$ are updated as\footnote{The weights (\ref{eq:wik}) are optimal for the problem (\ref{eq:FCM}) if they are probabilities, i.e. if they are required to add to 1 for every point $\x_i$.}
 \begin{equation}
 w_{ik}= \frac{1}{\sum\limits_{j=1}^K\,\left(\dfrac{\|\vi{\x}{i}-\vi{\Bc}{k}\|_2}
 {\|\vi{\x}{i}-\vi{\Bc}{j}\|_2}\right)^{2/m-1}},
 \label{eq:wik}
 \end{equation}
 and the centers are then calculated as convex combinations of the data points,
 \begin{equation}
 \Bc_k = \sum_{i=1}^N\,\left(\dfrac{w_{ik}^m}{\sum\limits_{j=1}^N\,w_{jk}^m}\right)\,\x_i.\ k\in\itoj{1}{K}.
 \label{eq:c-fuzz}
 \end{equation}
The constant $m\geq 1$ (the ``fuzzifier'') controls he fuzziness of the assignments, which become hard assignments in the limit as $m\downarrow 1$. For $m=1$, FCM is the classical $K$--means method. If $m=2$ then the weights $w_{ik}$ are inversely proportional to the square distance $\|\vi{\x}{i}-\vi{\Bc}{k}\|_2^2$, analogously to (\ref{eq:pd=D}).

Fuzzy clustering is one of the best known, and most widely used, clustering methods. However, it may need some modification if the data in question is very high--dimensional, see, e.g. \cite{KLAWONN-13}.


\textbf{3. Subjective probability}. There is some arbitrariness in the choice of the model and the fuzzifier $m$ in (\ref{eq:FCM})--(\ref{eq:wik}). In contrast, the probabilities (\ref{eq:p(x)}) can be justified axiomatically. Using ideas and classical results (\cite{LUCE-59}, \cite{YELLOT}) from subjective probability it is shown in Appendix B that the cluster membership probabilities $p_k(\x)$, and distances $d_k(\x)$, satisfy an inverse relationship, such as,
\begin{equation}
p_k(\x)\,\phi(d(\x,\Bc_k)) = f(\x),\ k\in\itoj{1}{K},
\label{eq:pd=f}
\end{equation}
where $\phi(\cdot)$ is non--decreasing, and $f(\x)$ does not depend on $k$. In particular, the choice $\phi (d)=d$ gives (\ref{eq:pd=D}), which works well in practice.

\textbf{4. Optimality principle}. Equation  (\ref{eq:pd=f}) is a necessary optimality condition for the problem
\begin{equation}
\min\,\left\{\sum_{k=1}^K\,p^2\,\phi(d(\x,\Bc_k)):\,\sum_{k=1}^K\,p_k =1,\ p_k\geq 0, k\in\itoj{1}{K}\right\},
\label{eq:Opt}
\end{equation}
that reduces to (\ref{eq:E}) for the choice $\phi(d)=d$. This shows the probabilities $\{p_k(\x)\}$ of (\ref{eq:p(x)}) to be optimal, for the model chosen.

\begin{remark}
\label{rem:Kelvin}
Minimizing a function of squares of probabilities seems unnatural, so a physical analogy may help. Consider an electric circuit with $K$ resistances $\{R_k\}$ connected in parallel. A current $I$ through the circuit splits into $K$ currents, with current $I_k$ through the resistance $R_k$. These currents solve an optimization problem (the \textbf{Kelvin principle})
\begin{equation}
\min_{I_1,\cdots,I_K}\,\left\{\sum_{k=1}^K\,I_k^2\,R_k:\,\sum_{k=1}^K\,I_k = I\right\}
\label{eq:Kelvin}
\end{equation}
that is analogous to (\ref{eq:E}). The optimality condition for (\ref{eq:Kelvin}) is \textbf{Ohm's law},
\[
I_k\,R_k = \text{constant}
\]
a statement that potential is well defined, and an analog of (\ref{eq:pd=D}). The equivalent resistance of the circuit, i.e. the resistance $R$ such that $I^2\,R$ is equal to the minimal value in (\ref{eq:Kelvin}), is then the JDF (\ref{eq:D(x)}) with $R_j$ instead of $d(\x,\vi{\Bc}{j})$ and $w=1$.
\end{remark}
\renewcommand{\theequation}{B-\arabic{equation}}
  \setcounter{equation}{0}  
\appendix
\begin{center}\textbf{Appendix B: Numerical Examples}\end{center}

In the following examples we use synthetic data to be clustered into $K=2$ clusters. The data consists of two randomly generated clusters, $\mathbf{X}_1$ with $N_1$ points, and $\mathbf{X}_2$ with $N_2$ points.

The data points $\x=(x_1,\cdots,x_n)\in\R^n$ of cluster $\mathbf{X}_k$ are such that all of their components $x_i, 1\leq i\leq n$ are generated by sampling from a distribution $F_k$ with mean $\mu_k$, $k=1,2$.
In cluster $\mathbf{X}_1$ we take $\mu_1=1$, and in cluster $\mathbf{X}_2$, $\mu_2=-1$.

We ran Algorithm~PCM($\ell_1$), with the parameters $\nu_0=1,\,\Delta=0.1$, and compared its performance with that of the fuzzy clustering method \cite{BEZDEK-81} with the $\ell_1$ norm, as well as the generalized Weiszfeld    algorithm of \cite{IY-BI-11} (that uses Euclidean distances), and the $\ell_1$--K-Means algorithm \cite{matlab}. For each method we used a stopping rule of at most 100 iterations (for Algorithm~PCM($\ell_1$) this replaces Step 4).
For each experiment we record the average percentage of misclassification (a misclassification occurs when a point in $\mathbf{X}_1$ is declared to be in $\mathbf{X}_2$, or vice versa) from 10 independent problems.
In examples \ref{ex:1},\ref{ex:2},\ref{ex:3} we choose the probability distributions to be $F_k = N(\mu_k, \sigma)$.
\begin{example}
\label{ex:1}
In this example the clusters are of equal size, $N_1=N_2=100$. Table~\ref{table:1} gives the percentages of misclassification under the five methods tested, for different values of $\sigma$ and dimension $n$.
\begin{table}
\label{table:1}
\begin{tabular}{|l|l|r|r|r|r|r|}
\hline
$\sigma$                          &        Method         &     $n=10^4$               &                      $n=5\cdot  10^4$  &     $n=10^5$  &     $n=5\cdot  10^5$  &    $n=10^6$  \\
\hline
$\sigma                           =        8$             &   PCM  ($\ell_1$)     &                      0.0        &      0.0   &         0.0   &          0.0    &    0.0\\     &      FCM  ($\ell_1$)  &           27.1  &    38.6  &    24.4  &    40.9  &    40.1\\   &      $K$-means  ($\ell_1$)  &           28.9  &     26.8  &     12.7  &     22.4  &     22.9\\  &       Gen.  Weiszfeld  &          48.5  &     48.8  &     48.0  &     48.2  &     47.9\\\hline
$\sigma                           =        16$            &   PCM  ($\ell_1$)     &                      4.3        &      0.0   &         0.0   &          4.7    &    0.0\\     &      FCM  ($\ell_1$)  &           41.0  &    42.1  &    44.5  &    43.9  &    39.5\\   &          $K$-means   ($\ell_1$)  &     41.8  &     35.2  &     23.7  &     23.5  &       23.6\\  &     Gen.       Weiszfeld  &     48.0  &     47.0  &     48.4  &     48.6  &             48.0\\\hline
$\sigma                           =        24$            &   PCM  ($\ell_1$)     &                      42.6       &      8.8   &         0.8   &          4.8    &    0.0\\     &      FCM  ($\ell_1$)  &           46.4  &    45.9  &    47.5  &    39.5  &    45.1\\   &      $K$-means  ($\ell_1$)  &           45.5  &     42.6  &     35.6  &     28.0  &     24.5\\  &       Gen.  Weiszfeld  &          47.9  &     47.8  &     47.1  &     48.0  &     48.2\\\hline
$\sigma                           =        32$            &   PCM  ($\ell_1$)     &                      46.0       &      42.2  &         13.4  &          13.6   &    0.0\\     &      FCM  ($\ell_1$)  &           47.4  &    46.0  &    44.8  &    46.0  &    46.0\\   &      $K$-means  ($\ell_1$)  &           46.4  &     45.7  &     40.3  &     36.0  &     30.7\\  &       Gen.  Weiszfeld  &          48.2  &     48.9  &     48.5  &     48.9  &     47.8\\\hline
\end{tabular}
\smallskip
\caption{Percentages              of       misclassified  data  in                     Example~\ref{ex:1}}
\end{table}

\end{example}

\begin{example}
\label{ex:2}
We use $N_1 = 200$ and $N_2 = 100$. Table~\ref{table:2} gives the percentages of misclassifications for different values of $\sigma$ and dimension $n$.
\begin{table}
\label{table:2}
\begin{tabular}{|l|l|r|r|r|r|r|}
\hline
$\sigma$                          &        Method         &     $n=10^4$               &                      $n=5\cdot  10^4$  &     $n=10^5$  &     $n=5\cdot  10^5$  &    $n=10^6$  \\
\hline
$\sigma                           =        8$             &   PCM  ($\ell_1$)    &                      0.0        &      0.0   &         0.0   &          0.0    &    0.0\\     &      FCM  ($\ell_1$)  &           11.9  &    19.1  &    25.2  &    30.1  &    22.6\\ &       $K$-means  ($\ell_1$)  &           20.8        &     25.9  &     18.4  &     31.4  &     13.6\\  &       Gen.   Weiszfeld  &          37.8       &     37.9  &     37.2  &     36.7  &     36.4\\\hline
$\sigma                           =        16$            &   PCM  ($\ell_1$)    &                      10.4       &      0.0   &         0.0   &          0.0    &    0.0\\     &      FCM  ($\ell_1$)  &           37.7  &    35.6  &    35.0  &    36.2  &    39.4\\ &       $K$-means  ($\ell_1$)  &           35.8  &     32.0  &     23.6  &     31.7  &       14.1\\  &      Gen.       Weiszfeld  &          38.0  &     37.7  &     35.8  &     36.6  &             37.8\\\hline
$\sigma                           =        24$            &   PCM  ($\ell_1$)    &                      44.1       &      5.9   &         1.2   &          0.0    &    0.0\\     &      FCM  ($\ell_1$)  &           41.3  &    37.7  &    38.9  &    36.7  &    34.6\\ &       $K$-means  ($\ell_1$)  &           40.3        &     39.9  &     32.7  &     33.3  &     15.5\\  &       Gen.   Weiszfeld  &          36.8       &     37.7  &     36.7  &     36.9  &     37.2\\\hline
$\sigma                           =        32$            &   PCM  ($\ell_1$)    &                      47.2       &      38.7  &         18.5  &          0.0    &    0.0\\     &      FCM  ($\ell_1$)  &           42.3  &    38.8  &    37.0  &    39.7  &    38.9\\ &       $K$-means  ($\ell_1$)  &           41.5        &     42.9  &     37.2  &     36.8  &     22.6\\  &       Gen.   Weiszfeld  &          36.7       &     36.9  &     36.0  &     36.5  &     37.4\\\hline
\end{tabular}
\smallskip
\caption{Percentages              of       misclassified  data  in                     Example~\ref{ex:2}}
\end{table}

\end{example}

\begin{example}
\label{ex:3}
In this case $N_1=1000,\,N_2=10$. The percentages of misclassification are included in Table~\ref{table:3}.
\begin{table}
\label{table:3}
\begin{tabular}{|l|l|r|r|r|r|r|}
\hline
$\sigma$                          &   Method         &     $n=10^3$               &                    $n=5\cdot  10^3$  &     $n=10^4$  &     $n=5\cdot  10^4$  &  $n=10^5$  \\
\hline                                                                     
$\sigma                           =   0.4$           &    PCM  ($\ell_1$)   &                    46.4       &      41.1  &         24.1  &          5.1    &  0.9\\     &   FCM  ($\ell_1$)  &  13.4  &  0.5   &  0.0   &  19.5  &  32.0\\ &  $K$-means  ($\ell_1$)  &  37.4  &  31.6  &  27.1  &     36.5  &     32.6\\  &       Gen.  Weiszfeld  &          35.4  &     36.7  &     32.6  &     33.7  &     38.7\\
\hline                                                                     
$\sigma                           =   0.8$           &    PCM  ($\ell_1$)   &                    47.4       &      31.4  &         23.4  &          5.4    &  1.8\\     &   FCM  ($\ell_1$)  &  29.3  &  9.5   &  13.0  &  27.3  &  37.2\\ &  $K$-means  ($\ell_1$)  &  37.5  &  32.0  &  27.1  &     36.4  &     32.6\\  &       Gen.  Weiszfeld  &          30.3  &     31.6  &     25.9  &     27.9  &     34.5\\
\hline                                                                     
$\sigma                           =   1.2$           &    PCM  ($\ell_1$)   &                    47.3       &      33.9  &         26.2  &          7.7    &  1.6\\     &   FCM  ($\ell_1$)  &  36.4  &  20.8  &  23.2  &  31.1  &  23.9\\ &  $K$-means  ($\ell_1$)  &  38.4  &  32.   2  &     28.3  &     36.4  &       32.6\\  &     Gen.       Weiszfeld  &     22.1  &     23.8  &     26.8  &     21.6  &       25.5\\
\hline                                                                     
$\sigma                           =   1.6$           &    PCM  ($\ell_1$)   &                    47.8       &      35.4  &         27.9  &          9.8    &  3.6\\     &   FCM  ($\ell_1$)  &  41.1  &  27.8  &  30.0  &  27.6  &  24.2\\ &  $K$-means  ($\ell_1$)  &  37.6  &  32.3  &  28.3  &     36.4  &     33.4\\  &       Gen.  Weiszfeld  &          23.1  &     23.2  &     21.1  &     25.4  &     31.6\\
\hline                                                                     
\end{tabular}
\smallskip
\caption{Percentages              of  misclassified  data  in                     Example~\ref{ex:3}}
\end{table}

\end{example}

In addition to experiments with normal data, we also consider instances with uniform data in Examples \ref{ex:4} and \ref{ex:5}.
In this case $F_k$ is a uniform distribution with mean $\mu_k$ and support length $|\mbox{supp}(F_k)|$.
\begin{example}
\label{ex:4}
We use $N_1=100,\,N_2=100$. The results are shown in Table~\ref{table:4}.
\begin{table}
\label{table:4}
\begin{tabular}{|l|l|r|r|r|r|r|}
\hline
$|\mbox{supp}(F)|$                &    Method         &                      $n=10^4$  &                    $n=5\cdot  10^4$  &     $n=10^5$  &       $n=5\cdot  10^5$  &          $n=10^6$  \\
\hline
$|\mbox{supp}(F)|=                8$   &             PCM   ($\ell_1$)    &         0.0                  &          0.0    &     0.0       &       0.0        &      0.0\\      &         FCM   ($\ell_1$)  &     0.0   &     0.1   &     0.3   &             2.7   &  0.1\\   &  $K$-means  ($\ell_1$)  &  5.0   &  5.0   &  4.8   &  0.0   &  0.0\\   &  Gen.  Weiszfeld  &  0.0   &  0.0   &  0.0   &  0.0   &  0.0\\\hline
$|\mbox{supp}(F)|=                16$  &             PCM   ($\ell_1$)    &         0.0                  &          0.0    &     0.0       &       0.0        &      0.0\\      &         FCM   ($\ell_1$)  &     8.9   &     29.1  &     26.6  &             21.9  &  25.6\\  &  $K$-means  ($\ell_1$)  &  23.8  &  25.9  &  18.0  &  23.4  &  17.8\\  &  Gen.  Weiszfeld  &  47.0  &  49.2  &  46.8  &  46.2  &  47.4\\\hline
$|\mbox{supp}(F)|=                24$  &             PCM   ($\ell_1$)    &         0.0                  &          0.0    &     0.0       &       0.0        &      0.0\\      &         FCM   ($\ell_1$)  &     23.6  &     39.1  &     20.1  &             27.8  &  25.4\\  &  $K$-means  ($\ell_1$)  &  32.0  &  27.2  &  18.7  &  23.2  &  18.1\\  &  Gen.  Weiszfeld  &  47.1  &  47.4  &  48.0  &  47.4  &  47.3\\\hline
$|\mbox{supp}(F)|=                32$  &             PCM   ($\ell_1$)    &         0.3                  &          0.0    &     0.0       &       0.0        &      0.0\\      &         FCM   ($\ell_1$)  &     28.8  &     39.9  &     36.6  &             42.6  &  38.8\\  &  $K$-means  ($\ell_1$)  &  35.7  &  27.5  &  19.3  &  23.4  &  18.6\\  &  Gen.  Weiszfeld  &  48.1  &  48.0  &  47.9  &  47.8  &  47.9\\\hline
\end{tabular}
\smallskip
\caption{Percentages              of   misclassified  data                   in        Example~\ref{ex:4}}
\end{table}

\end{example}

\begin{example}
\label{ex:5}
In this instance $N_1=200,\,N_2=100$. The results are shown in Table~\ref{table:5}.

\begin{table}
\label{table:5}
\begin{tabular}{|l|l|r|r|r|r|r|}
\hline
$|\mbox{supp}(F)|$                &    Method         &                      $n=10^4$  &                    $n=5\cdot  10^4$  &  $n=10^5$  &  $n=5\cdot  10^5$  &      $n=10^6$  \\
\hline
$|\mbox{supp}(F)|=                8$   &              PCM  ($\ell_1$)   &         0.0                  &          0.0    &  0.0       &  0.0        &      0.0\\  &         FCM  ($\ell_1$)  &  0.0   &  10.0  &  7.2   &  0.4   &  0.4\\   &  $K$-means  ($\ell_1$)  &  4.9   &  13.5  &  0.0   &  4.9   &  14.4\\  &  Gen.  Weiszfeld  &  0.0   &  0.0   &  13.1  &  0.0   &  0.0\\\hline
$|\mbox{supp}(F)|=                16$  &              PCM  ($\ell_1$)   &         0.0                  &          0.0    &  0.0       &  0.0        &      0.0\\  &         FCM  ($\ell_1$)  &  30.8  &  28.0  &  28.3  &  14.8  &  18.3\\  &  $K$-means  ($\ell_1$)  &  22.2  &  31.8  &  18.4  &  17.6  &  32.0\\  &  Gen.  Weiszfeld  &  39.2  &  36.6  &  35.7  &  36.7  &  36.3\\\hline
$|\mbox{supp}(F)|=                24$  &              PCM  ($\ell_1$)   &         0.0                  &          0.0    &  0.0       &  0.0        &      0.0\\  &         FCM  ($\ell_1$)  &  21.0  &  26.1  &  30.3  &  27.5  &  37.6\\  &  $K$-means  ($\ell_1$)  &  32.3  &  36.3  &  22.6  &  18.1  &  35.1\\  &  Gen.  Weiszfeld  &  37.4  &  38.4  &  37.6  &  36.5  &  37.8\\\hline
$|\mbox{supp}(F)|=                32$  &              PCM  ($\ell_1$)   &         1.5                  &          0.0    &  0.0       &  0.0        &      0.0\\  &         FCM  ($\ell_1$)  &  38.0  &  35.0  &  36.5  &  38.5  &  33.5\\  &  $K$-means  ($\ell_1$)  &  35.1  &  36.6  &  23.1  &  18.6  &  35.4\\  &  Gen.  Weiszfeld  &  39.7  &  36.0  &  37.5  &  40.0  &  38.0\\\hline
\end{tabular}
\smallskip
\caption{Percentages              of   misclassified  data                   in        Example~\ref{ex:5}}
\end{table}

\end{example}

In all examples Algorithm~PCM($\ell_1$) was unsurpassed and was the clear winner in Examples~\ref{ex:1}, \ref{ex:2}, \ref{ex:4} and \ref{ex:5}.

\end{document}